\newtheorem{theorem}{Theorem}[section]
\newtheorem{lemma}[theorem]{Lemma}
\newtheorem{remark}[theorem]{Remark}
\newtheorem{assumption}[theorem]{Assumption}
\def\section{\@startsection {section}{1}{\z@}{3.25ex plus 1ex minus
		.2ex}{1.5ex plus .2ex}{\large\bf}}
\def\subsection{\@startsection{subsection}{2}{\z@}{3.25ex plus 1ex minus
		.2ex}{1.5ex plus .2ex}{\normalsize\bf}}
\title{A small time approximation for the solution to the Zakai Equation}
\author{Alberto Lanconelli\thanks{Dipartimento di Scienze Statistiche Paolo Fortunati, Università di Bologna, Bologna, Italy. \textbf{e-mail}: alberto.lanconelli2@unibo.it} \and  Ramiro Scorolli\thanks{Dipartimento di Scienze Statistiche Paolo Fortunati, Università di Bologna, Bologna, Italy. \textbf{e-mail}: ramiro.scorolli2@unibo.it}}
\date{\today}
\begin{document}
	
	\maketitle
	
	\bigskip
	
	\begin{abstract}
	We propose a novel small time approximation for the solution to the Zakai equation from nonlinear filtering theory. We prove that the unnormalized filtering density is well described over short time intervals by the solution of a deterministic partial differential equation of Kolmogorov type; the observation process appears in a pathwise manner through the degenerate component of the Kolmogorov's type operator. The rate of convergence of the approximation is of order one in the lenght of the interval. Our approach combines ideas from Wong-Zakai-type results and Wiener chaos approximations for the solution to the Zakai equation. The proof of our main theorem relies on the well-known Feynman-Kac representation for the unnormalized filtering density and careful estimates which lead to completely explicit bounds.     	
	\end{abstract}
	
	Key words and phrases: nonlinear filtering, Zakai equation, Feynmn-Kac formula, Wick product. \\
	
	AMS 2000 classification: 60G35, 60H15, 60H07.
	
	\bigskip
	
	\allowdisplaybreaks
	
\section{Introduction and statement of the main result}\label{introduction}
	
In this short note we derive a new small time approximation for the solution to the Zakai equation
\begin{align}\label{Zakai SPDE}
u(t,x)=u_0(x)+\int_0^t\mathcal{L}_x^{\star}u(s,x)ds+\int_0^th(x)u(s,x)dY_s,\quad t\in [0,1],x\in\mathbb{R}^d.
\end{align}	
Here:
\begin{itemize}
\item $\mathcal{L}_x^{\star}$ is the formal adjoint of $\mathcal{L}_x$, generator of the $d$-dimensional \emph{signal} process $\{X_t\}_{t\in [0,1]}$ which is assumed to solve the stochastic differential equation
\begin{align}\label{signal}
X_t=X_0+\int_0^tb(X_s)ds+\int_0^t\sigma(X_s)dB_s,\quad t\in [0,1];
\end{align}
the process $\{B_t\}_{t\in [0,1]}$ is a standard $d$-dimensional Brownian motion defined on the complete probability space $(\Omega,\mathcal{F},\mathbb{P})$; 
\item  $\{Y_t\}_{t\in [0,1]}$ is the one dimensional \emph{observation} process described by
\begin{align}\label{observation}
Y_t=y_0+\int_0^th(X_s)ds+W_t,\quad t\in [0,1],
\end{align}
with $\{W_t\}_{t\in [0,1]}$ being a standard one-dimensional Brownian motion defined on $(\Omega,\mathcal{F},\mathbb{P})$ and independent of $\{B_t\}_{t\geq 0}$. 
\end{itemize}
The solution $\{u(t,x)\}_{t\in [0,1],x\in\mathbb{R}^d}$ to the Zakai equation \eqref{Zakai SPDE}, usually called \emph{unnormalized filtering density}, plays a crucial role in the nonlinear filtering problem since it identifies uniquely the conditional distribution of $X_t$ given $\mathcal{F}^Y_t:=\sigma(Y_s, 0\leq s\leq t)$. The reader is referred to the original paper \cite{Zakai} and the references quoted there; for an exhaustive treatment of the subject we suggest the excellent review \cite{Heunis}, as well as the books \cite{Kallianpur book} and \cite{Lipster}. \\
Existence, uniqueness and regularity properties for the solution to (\ref{Zakai SPDE}) can be found for instance, under different sets of assumptions and solution concepts, in the classic works \cite{Budhiraja},\cite{Krylov},\cite{Kunita},\cite{Pardoux} and the more recent paper \cite{BP}. We also mention a useful Feynman-Kac representation for the solution $\{u(t,x)\}_{t\in [0,1],x\in\mathbb{R}^d}$ obtained in \cite{Kunita} and, in a slightly different form, in \cite{BP}. This representation will play a crucial role in our investigation. \\
From the applications point of view, closed form expressions for the solution to the Zakai equation are certainly desirable; however, as pointed in \cite{Benes} only few particular cases of \eqref{Zakai SPDE} allow for explicit computations. The important issue of deriving simple approximation schemes for the solution to (\ref{Zakai SPDE}) have been considered in \cite{Bensoussan} and \cite{De Masi} which employ splitting up methods and time discretization, respectively; Wong-Zakai-type results were investigated in \cite{Chaleyat} and \cite{HKX} while \cite{Budhiraja} and \cite{Lotosky} proposed a Wiener chaos approach. We also mention the so called \emph{pathwise filtering} that steams from the problem of having a robust, with respect to the observation process, filter; this has been discussed in \cite{Clark} and \cite{Davis}.  \\
The approach proposed in the current paper combines ideas from the Wong-Zakai approximation proposed in \cite{HKX}, where the signal process is smoothed through a polygonal approximation, and the Wiener chaos approach presented in \cite{Budhiraja} and \cite{Lotosky}, where one relates equation (\ref{Zakai SPDE}) to a system of nested deterministic partial differential equations solved by the kernels of the Cameron-Martin decomposition of the solution $\{u(t,x)\}_{t\in [0,1],x\in\mathbb{R}^d}$. We refer the reader to Remark \ref{heuristic} below for the heuristic idea supporting our analysis and its link to the aforementioned approaches. \\
The main novelty of our result is the connection between equation (\ref{Zakai SPDE}) and a deterministic partial differential equation of Kolmogorov type (see e.g. \cite{OR}), where the observation process enters as a degenerate component of the second order differential operator $\mathcal{L}_x^{\star}$. We prove that the solution $\{u(t,x)\}_{t\in [0,1],x\in\mathbb{R}^d}$ to the Zakai equation \eqref{Zakai SPDE} can be approximated over small intervals of time by the solution of the aforementioned degenerate partial differential equation, with the observation process having a pathwise role. This approximation has the same rate of convergence of one obtained in \cite{Lotosky} and is described by completely explicit constants.\\
To be more specific, we now introduce some notation and state our main result. In the sequel the following regularity conditions will be in force.

\begin{assumption}\label{assumption 1}\quad
	\begin{enumerate}	
	\item For $1\leq i,j\leq d$, the functions $b_i:\mathbb{R}^d\to\mathbb{R}^d$ and $a_{ij}:\mathbb{R}^d\to\mathbb{R}^d$, where
	\begin{align}\label{aij}
	a_{ij}(x):=\sum_{k=1}^d\sigma_{ik}(x)\sigma_{jk}(x),\quad x\in\mathbb{R}^d,
	\end{align}
	are bounded with bounded partial derivatives up to the third order. Moreover, the matrix $\{a_{ij}(x)\}_{1\leq i,j\leq d}$ is uniformly elliptic, i.e. there exists two positive constants $\mu_1<\mu_2$ such that
	\begin{align*}
	\mu_1|z|^2\leq \sum_{i,j=1}^da_{ij}(x)z_iz_j\leq \mu_2|z|^2,\quad\mbox{ for all $z\in\mathbb{R}^d$}, 
	\end{align*}
	with $|z|^2:=z_1^2+\cdot\cdot\cdot+z_d^2$.
	\item The initial data $X_0$ in \eqref{signal} is random, independent of $\{B_t\}_{t\in [0,1]}$ and its distribution is absolutely continuous with respect to the $d$-dimensional Lebesgue measure; its density $u_0:\mathbb{R}^d\to\mathbb{R}$ is bounded and acts as initial data in (\ref{Zakai SPDE}). 
    \item The function $h:\mathbb{R}^d\to\mathbb{R}$ is bounded and globally Lipschitz continuous.
    \end{enumerate}
\end{assumption}

\begin{remark}\label{contants}
We observe that, according to Assumption \ref{assumption 1}, there exists a positive constant $L$ such that
\begin{align}\label{L}
	|h(x_1)-h(x_2)|\leq L|x_1-x_2|,\quad\mbox{ for all $x_1,x_2\in\mathbb{R}^d$}.	
\end{align}	
Moreover, there exists a positive constant $M$ such that 
\begin{align}\label{M}
\max\{|a(x)|^2,|b^{\star}(x)|\}\leq M(1+|x^2|),\quad\mbox{ for all $x\in\mathbb{R}^d$},	
\end{align} 
where $b^{\star}_i(x):=\sum_{j=1}^d\partial_{x_j}a_{ij}(x)-b_i(x)$, $i=1,...,d,$. We will need these two constants in the statement of our main theorem.
\end{remark}

According to the Girsanov theorem and thanks to the assumption of boundedness on $h$, the prescription
\begin{align*}
\mathbb{P}_1(A):=\int_{A}e^{-\int_0^1h(X_s(\omega))dW_s(\omega)-\frac{1}{2}\int_0^1h(X_s(\omega))^2ds}d\mathbb{P}(\omega),\quad A\in\mathcal{F},
\end{align*}
defines a probability measure on $(\Omega,\mathcal{F})$; moreover, the stochastic process $\{Y_t-y_0\}_{t\in [0,1]}$ in (\ref{observation}) becomes on the probability space $(\Omega,\mathcal{F},\mathbb{P}_1)$ a one dimensional Brownian motion independent of $\{B_t\}_{t\geq 0}$.
In the sequel we will write $\mathbb{E}_1$ to denote the expectation under the probability measure $\mathbb{P}_1$.

We are now ready to state our main result.

\begin{theorem}\label{main theorem}
	Let Assumption \ref{assumption 1} be in force and, for $0<T<1$, let 
	\begin{align*}
		[0,T]\times\mathbb{R}^d\times\mathbb{R}\ni(t,x,y)\mapsto v(t,x,y)
	\end{align*}
	be a classical solution of the Cauchy problem
	\begin{align}
		\begin{cases}\label{Kolmogorov operator}
			\partial_t v(t,x,y)=\mathcal{L}_x^{\star}v(t,x,y)-h(x)\partial_yv(t,x,y),\quad (t,x,y)\in]0,T]\times\mathbb{R}^d\times\mathbb{R};\\
			v(0,x,y)=u_0(x)e^{-\frac{y^2}{2T}},\quad (x,y)\in\mathbb{R}^d\times\mathbb{R}.
			\end{cases}
		\end{align}	
Then, for any $q\geq 1$ and $K>0$, we have
\begin{align}\label{estimate}
\sup_{|x|\leq K}\mathbb{E}_1\left[\left|u(T,x)-e^{\frac{(Y_{T}-y_0)^2}{2T}}v(T,x,Y_{T}-y_0)\right|^q\right]^{1/q}\leq \mathcal{C} T,
\end{align}
with 
\begin{align}\label{contant}
\mathcal{C}:=\frac{2}{\sqrt{3}}|u_0|_{\infty}e^{T\left(|c|_{\infty}+\frac{q_1-1}{2}|h|_{\infty}^2+\sqrt{M}+M/2\right)}\left(\kappa(q_2)+\sqrt{T}|h|_{\infty}\right)L\sqrt{2(1+K^2)(1+T)}.
\end{align}
Here $L$ and $M$ are defined in \eqref{L} and \eqref{M}, respectively; the constants $q_1,q_2\geq 1$ verify the identity $\frac{1}{q_1}+\frac{1}{q_2}=\frac{1}{q}$; $\kappa(q_2)$ is given by $\sqrt{2}\left(\Gamma(\frac{q_2+1}{2})/\sqrt{\pi}\right)^{1/q_2}$; $|u_0|_{\infty}$ and $|h|_{\infty}$ denotes the $L^{\infty}(\mathbb{R}^d)$-norms of $u_0$ and $h$, respectively. 
\end{theorem}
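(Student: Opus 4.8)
The plan is to write both $u(T,x)$ and the approximant $e^{(Y_T-y_0)^2/(2T)}v(T,x,Y_T-y_0)$ as expectations over one and the same auxiliary diffusion and then to compare the two integrands pathwise in the observation. Let $\{\eta^x_t\}_{t\in[0,T]}$ denote the diffusion whose generator is the second-order part of $\mathcal{L}_x^\star$, i.e. with diffusion matrix $a$ and drift $b^\star$ (as in Remark \ref{contants}), started at $\eta_0^x=x$ and driven by a Brownian motion independent of $Y$ under $\mathbb{P}_1$; write $c$ for the (bounded, by Assumption \ref{assumption 1}) zeroth-order coefficient of $\mathcal{L}_x^\star$. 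The cited Feynman--Kac representation (\cite{Kunita},\cite{BP}) then reads
\begin{align*}
u(T,x)=\mathbb{E}_1\!\left[u_0(\eta_T^x)\exp\!\left(\int_0^T c(\eta_s^x)\,ds+\int_0^T h(\eta_s^x)\,dY_s-\tfrac12\int_0^T h(\eta_s^x)^2\,ds\right)\Bigg|\mathcal{F}_T^Y\right].
\end{align*}
On the other hand, the operator $\mathcal{L}_x^\star-h(x)\partial_y$ governs the pair $(\eta_t^x,\zeta_t)$ with $\zeta_t=y-\int_0^t h(\eta_s^x)\,ds$ (with $c$ entering as a Feynman--Kac potential), so the Feynman--Kac formula for \eqref{Kolmogorov operator} gives $v(T,x,y)=\mathbb{E}\big[u_0(\eta_T^x)\exp(\int_0^T c(\eta_s^x)\,ds)\,e^{-\zeta_T^2/(2T)}\big]$. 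Setting $y=\bar Y:=Y_T-y_0$, multiplying by $e^{\bar Y^2/(2T)}$ and completing the square in the Gaussian factor turns $(\bar Y^2-\zeta_T^2)/(2T)$ into $\tfrac{\bar Y}{T}H-\tfrac{1}{2T}H^2$, where $H:=\int_0^T h(\eta_s^x)\,ds$. This algebraic identity is the conceptual heart of the argument: it exhibits the approximant as the \emph{same} Feynman--Kac expectation as $u(T,x)$, but with $\int_0^T h\,dY_s$ and $\tfrac12\int_0^T h^2\,ds$ replaced by their frozen, Wong--Zakai counterparts $\tfrac{\bar Y}{T}H$ and $\tfrac{1}{2T}H^2$.

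Denote by $\Phi$ and $\Psi$ the exponents of $u(T,x)$ and of the approximant (the terms $\int_0^T c\,ds$ being common and hence cancelling). Writing $\bar h:=H/T$ and using $\tfrac12\int_0^T h^2\,ds-\tfrac1{2T}H^2=\tfrac12\int_0^T(h(\eta_s^x)-\bar h)^2\,ds$, one gets
\begin{align*}
\Phi-\Psi=\int_0^T\big(h(\eta_s^x)-\bar h\big)\,dY_s-\tfrac12\int_0^T\big(h(\eta_s^x)-\bar h\big)^2\,ds.
\end{align*}
Since $u_0$ is bounded, conditional Jensen together with the elementary bound $|e^{\Phi}-e^{\Psi}|\le|\Phi-\Psi|(e^{\Phi}+e^{\Psi})$ reduces the problem to estimating $|u_0|_\infty\,\big\||\Phi-\Psi|(e^{\Phi}+e^{\Psi})\big\|_{L^q(\mathbb{P}_1)}$, to which I apply Hölder's inequality with the conjugate pair $\tfrac1{q_1}+\tfrac1{q_2}=\tfrac1q$: one factor collects the $L^{q_1}$ exponential moments of $\Phi,\Psi$, the other the $L^{q_2}$ norm of $\Phi-\Psi$.

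For the exponential moments I would condition on the path of $\eta^x$ (independent of $Y$ under $\mathbb{P}_1$, so that $\{Y_t-y_0\}$ is there a Brownian motion) and use that $\exp(q_1\int_0^T h\,dY_s-\tfrac{q_1^2}2\int_0^T h^2\,ds)$ is a Doléans exponential martingale of unit mean; this yields $\|e^{\Phi}\|_{L^{q_1}(\mathbb{P}_1)}\le e^{T(|c|_\infty+\frac{q_1-1}{2}|h|_\infty^2)}$, and likewise for $\Psi$, producing the corresponding factor in \eqref{contant}. For $\|\Phi-\Psi\|_{L^{q_2}(\mathbb{P}_1)}$ the key point is that, conditionally on $\eta^x$, the stochastic integral $\int_0^T(h(\eta_s^x)-\bar h)\,dY_s$ is centred Gaussian with variance $V:=\int_0^T(h(\eta_s^x)-\bar h)^2\,ds$, so its conditional $L^{q_2}$ norm equals $\kappa(q_2)\sqrt{V}$, while the drift term obeys $\tfrac12 V\le |h|_\infty\sqrt{T}\,\sqrt V$; by the triangle inequality these combine to give $\|\Phi-\Psi\|_{L^{q_2}(\mathbb{P}_1)}\le(\kappa(q_2)+\sqrt T\,|h|_\infty)\,\|\sqrt V\|_{L^{q_2}(\mathbb{P}_1)}$, exactly the structure of \eqref{contant}. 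Finally, the variance identity $V=\tfrac1{2T}\int_0^T\!\int_0^T(h(\eta_s^x)-h(\eta_r^x))^2\,ds\,dr$, the Lipschitz bound \eqref{L}, the increment estimate $\mathbb{E}_1|\eta_s^x-\eta_r^x|^2\lesssim(1+|x|^2)(1+T)|s-r|$ (which rests on a Gronwall moment bound $\sup_{t\le T}\mathbb{E}_1(1+|\eta_t^x|^2)\le(1+|x|^2)e^{T(\sqrt M+M/2)}$ built from \eqref{M}) together with $\int_0^T\!\int_0^T|s-r|\,ds\,dr=T^3/3$ deliver $\|\sqrt V\|_{L^{q_2}}\le\tfrac{2}{\sqrt3}L\sqrt{2(1+K^2)(1+T)}\,T$ after taking $\sup_{|x|\le K}$.

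I expect the main obstacle to be twofold. Conceptually, one must realise both Feynman--Kac formulas on a common probability space carrying a single auxiliary diffusion $\eta^x$ independent of $Y$, so that under $\mathbb{P}_1$ the value $Y_T-y_0$ can legitimately be frozen when evaluating $v$; this pathwise conditioning is precisely what links the transport term $-h(x)\partial_y$ of the Kolmogorov problem \eqref{Kolmogorov operator} to the stochastic integral in the Zakai representation. Technically, since $a$ and $b^\star$ are controlled only through the growth bound \eqref{M}, the exponential moments and the increment estimates for $\eta^x$ must be produced by hand via Gronwall, and the exponential moment of the stochastic integral must be decoupled from the increment bound exactly by the Hölder pair $(q_1,q_2)$; this is the source of the remaining factor $e^{T(\sqrt M+M/2)}$ and of the dependence on $1+K^2$ in \eqref{contant}. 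The principal burden is therefore not a single hard inequality but the careful, fully explicit tracking of all these constants so as to reproduce \eqref{contant} verbatim.
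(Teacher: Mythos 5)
Your overall strategy is the paper's: represent both $u(T,x)$ and $e^{(Y_T-y_0)^2/(2T)}v(T,x,Y_T-y_0)$ as Feynman--Kac expectations over one auxiliary diffusion with drift $b^{\star}$ and diffusion $\sigma$, complete the square so the approximant becomes the ``frozen'' exponential $\exp\bigl(\tfrac{Y_T-y_0}{T}H-\tfrac{1}{2T}H^2\bigr)$, use $|e^{a}-e^{b}|\leq(e^{a}+e^{b})|a-b|$, H\"older with $\tfrac1{q_1}+\tfrac1{q_2}=\tfrac1q$, Gaussian moment identities, and the Lipschitz/second-moment increment estimates. However, two points need attention, one of which is a genuine gap. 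First, you misquote the key external input: the representation of $u(T,x)$ from \cite{Kunita} and \cite{BP} carries a time reversal in the stochastic integral, $\int_0^T h(\eta^x_{T-s})\,dY_s$, not $\int_0^T h(\eta^x_s)\,dY_s$. The two random variables have the same law but are \emph{not} almost surely equal (compare first Wiener chaos coefficients: the reversed version produces $P^{\star}_{T-s}(h\,P^{\star}_s u_0)(x)$, matching the Zakai iteration, while yours produces $P^{\star}_s(h\,P^{\star}_{T-s}u_0)(x)$), so your claimed identity for $u(T,x)$ is false as stated. Luckily every quantity entering your bounds ($|f-g|_2$, the conditional variances and the covariance with $Y_T$) is invariant under $s\mapsto T-s$, so this is fixable by simply inserting the time reversal.

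The genuine gap is the order in which you take norms. You first apply conditional Jensen to pass to the full product-space norm $\bigl\||\Phi-\Psi|(e^{\Phi}+e^{\Psi})\bigr\|_{L^q}$ and only then apply H\"older, which leaves you needing $\|\sqrt V\|_{L^{q_2}}$, i.e.\ the $q_2$-th moment of $\sqrt V$ over the auxiliary randomness. Your stated ingredients --- the variance identity, \eqref{L}, and the \emph{second}-moment increment bound $\hat{\mathbb{E}}|\eta^x_s-\eta^x_r|^2\lesssim(1+|x|^2)(1+T)|s-r|$ --- control only $\hat{\mathbb{E}}[V]$, hence $\|\sqrt V\|_{L^2}$; they do not control $\|\sqrt V\|_{L^{q_2}}$ when $q_2>2$ (and $q_2>q\geq1$ can be arbitrarily large). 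Interpolating with the trivial bound $V\leq 4T|h|_\infty^2$ only yields a rate $T^{1/2+1/q_2}$, strictly worse than $T$; invoking higher-moment SDE estimates instead would introduce $q_2$-dependent constants incompatible with the explicit $\mathcal{C}$ in \eqref{contant}. The paper avoids this entirely by reversing the order: it applies Minkowski's integral inequality \emph{first}, $\bigl\|\hat{\mathbb{E}}[\,\cdot\,]\bigr\|_{L^q(\mathbb{P}_1)}\leq\hat{\mathbb{E}}\bigl[\|\cdot\|_{L^q(\mathbb{P}_1)}\bigr]$, so that H\"older and the Gaussian computations are performed conditionally on the auxiliary path (where $f$ and $g$ are deterministic, see Lemma \ref{lemma}), and in the end only $\hat{\mathbb{E}}[\sqrt V]\leq(\hat{\mathbb{E}}[V])^{1/2}$ is required --- exactly what the second-moment estimate from \cite{Mao book} delivers. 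Your sketch should be reorganized accordingly; as a symptom of the same looseness, your displayed bound for $\|\sqrt V\|_{L^{q_2}}$ also drops the factor $e^{T(\sqrt M+M/2)}$ and absorbs a factor $2$ that in \eqref{contant} actually originates from the sum of the two exponential moments.
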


\begin{remark}\label{heuristic}
The heuristic idea that links equation \eqref{Zakai SPDE} to equation \eqref{Kolmogorov operator} is as follows. Write \eqref{Zakai SPDE} in the differential form 	
\begin{align}\label{diff Zakai SPDE}
	\partial_tu(t,x)=\mathcal{L}_x^{\star}u(t,x)+h(x)u(t,x)\diamond\frac{dY_t}{dt},\quad u(0,x)=u_0(x),
\end{align}		
where $\diamond$ denotes the Wick product associated to the Brownian motion $\{Y_t-y_0\}_{t\in [0,1]}$ on the probability space $(\Omega,\mathcal{F},\mathbb{P}_1)$. The use of the Wick product is dictated  by the It\^o's interpretation of \eqref{Zakai SPDE} (see \cite{HOUZ} and \cite{Janson} for a discussion on this issue and detailed analysis of the Wick product). If equation \eqref{diff Zakai SPDE} is considered on a small time interval $[0,T]$, one may replace $\frac{dY_t}{dt}$ with $\frac{Y_T-y_0}{T}$ (this amounts at considering a Wong-Zakai approximation with the rudest possible partition of the interval $[0,T]$); this gives
\begin{align}\label{diff Zakai SPDE 2}
	\partial_tu(t,x)=\mathcal{L}_x^{\star}u(t,x)+\frac{h(x)}{T}u(t,x)\diamond (Y_T-y_0),\quad u(0,x)=u_0(x).
\end{align}
In general, the Wick-multiplication between a random variable $X$ and an element from the first order Wiener chaos, say $I(f)$, can be rewritten as
\begin{align*}
X\diamond I(f)=X\cdot I(f)-D_fX,
\end{align*}
where $D_fX$ stands for the directional Malliavin derivative of $X$, in the direction $\int_0^{\cdot}f(s)ds$ (see \cite{Nualart}). Since, $Y_T-y_0=\int_0^1\boldsymbol{1}_{[0,T]}(s)dY_s$ is an element in the first Wiener chaos associated with the Brownian motion $\{Y_t-y_0\}_{t\in [0,1]}$ and probability space $(\Omega,\mathcal{F},\mathbb{P}_1)$, we can transform equation \eqref{diff Zakai SPDE 2} into
\begin{align}\label{Zakai increment}
\partial_tu(t,x)=\mathcal{L}_x^{\star}u(t,x)+\frac{h(x)}{T}u(t,x) (Y_T-y_0)-\frac{h(x)}{T}D_{\boldsymbol{1}_{[0,T]}}u(t,x).	
\end{align}
We now search for a solution $u(t,x)$ to equation \eqref{Zakai increment} of the form
\begin{align}\label{guess}
	u(t,x,\omega)=\tilde{u}(t,x,Y_T(\omega)-y_0),
\end{align}
for some $\tilde{u}:[0,T]\times\mathbb{R}^d\times\mathbb{R}\to\mathbb{R}$ to be determined. A substitution of \eqref{guess} in \eqref{Zakai increment} yields, together with the chain rule for the Malliavin derivative, 
\begin{align*}
	\partial_t\tilde{u}(t,x,Y_T-y_0)=&\mathcal{L}_x^{\star}\tilde{u}(t,x,Y_T-y_0)+\frac{h(x)}{T}\tilde{u}(t,x,Y_T-y_0)(Y_T-y_0)\\
	&-h(x)\partial_y\tilde{u}(t,x,Y_T-y_0);	
\end{align*}
note that here the term $Y_T-y_0$ can be tackled at a path-wise level. Equation \eqref{Kolmogorov operator} is now obtained via the simple transformation
\begin{align*}
v(t,x,y):=\tilde{u}(t,x,y)e^{-\frac{y^2}{2T}}, \quad t \in [0,T],x\in\mathbb{R}^d,y\in\mathbb{R}.
\end{align*}
It is not difficult to see, using Theorem 4.12 in \cite{Janson} and the Feynman-Kac representation for $\{u(t,x)\}_{t\in [0,1],x\in\mathbb{R}^d}$ in \cite{BP}, that we also have
\begin{align*}
\mathbb{E}_1[u(T,x)|Y_{T}-y_0]=e^{\frac{(Y_{T}-y_0)^2}{2T}}v(T,x,Y_{T}-y_0);
\end{align*}
this spots the analogy between our approach and the one in \cite{Lotosky} where projections of $u(T,x)$ on suitable families of elements from the Wiener chaos were utilized to propose approximation schemes for the solution to (\ref{Zakai SPDE}).    
\end{remark}

\section{Proof of Theorem \ref{main theorem}}\label{section proof main theorem}

We start with some notation. The generator $\mathcal{L}_x$ of the signal process $\{X_t\}_{t\in [0,1]}$ in (\ref{signal}) is
\begin{align*}
\mathcal{L}_xf(x)=\frac{1}{2}\sum_{i,j=1}^da_{ij}(x)\partial^2_{x_ix_j}f(x)+\sum_{i=1}^db_i(x)\partial_{x_i}f(x),
\end{align*}
where the $a_{ij}(x)$'s are defined in (\ref{aij}). The adjoint operator $\mathcal{L}^{\star}_x$ is given by
\begin{align*}
\mathcal{L}^{\star}_xf(x)=\frac{1}{2}\sum_{i,j=1}^da_{ij}(x)\partial^2_{x_ix_j}f(x)+\sum_{i=1}^db^{\star}_i(x)\partial_{x_i}f(x)+c(x)f(x),
\end{align*}
with
\begin{align*}
b^{\star}_i(x):=\sum_{j=1}^d\partial_{x_j}a_{ij}(x)-b_i(x),\quad i=1,...,d,
\end{align*}
(see Remark \ref{contants}) and
\begin{align*}
c(x):=\sum_{i=1}^d\left(\frac{1}{2}\sum_{j,k=1}^d\partial^2_{x_jx_k}a_{ij}(x)-\partial_{x_k}b_i(x)\right).
\end{align*}
It is convenient to split the operator $\mathcal{L}^{\star}_x$ as
\begin{align*}
\mathcal{L}^{\star}_xf(x)=\mathtt{L}^{\star}_xf(x)+c(x)f(x)
\end{align*}
where we set
\begin{align*}
\mathtt{L}^{\star}_xf(x):=\frac{1}{2}\sum_{i,j=1}^da_{ij}(x)\partial^2_{x_ix_j}f(x)+\sum_{i=1}^db^{\star}_i(x)\partial_{x_i}f(x).
\end{align*}
With this notation at hand, the Cauchy problem (\ref{Kolmogorov operator}) takes the form
\begin{align}\label{PDE}
\begin{cases}
\partial_t v(t,x,y)=\mathtt{L}^{\star}_xv(t,x,y)+c(x)v(t,x,y)-h(x)\partial_yv(t,x,y)\\ \quad (t,x,y)\in]0,T]\times\mathbb{R}^d\times\mathbb{R};\\
v(0,x,y)=u_0(x)e^{-\frac{y^2}{2T}},\quad (x,y)\in\mathbb{R}^d\times\mathbb{R}.
\end{cases}
\end{align}

Now, assume 
\begin{align*}
[0,T]\times\mathbb{R}^d\times\mathbb{R}\ni(t,x,y)\mapsto v(t,x,y)
\end{align*}
to be a classical solution of (\ref{PDE}). According to the Feynman-Kac formula (see, for instance, Theorem 1.1, page 120, and the comments at page 122 in \cite{Freidlin}), we can write
\begin{align*}
v(T,x,y)&=\hat{\mathbb{E}}\left[u_0(\hat{\xi}_{T}^x)e^{-\frac{\left(y-\int_0^{T}h(\hat{\xi}_s^x)ds\right)^2}{2T}}e^{\int_0^{T}c(\hat{\xi}_s^x)ds}\right]\\
&=e^{-\frac{y^2}{2T}}\hat{\mathbb{E}}\left[u_0(\hat{\xi}_{T}^x)e^{\int_0^{T}c(\hat{\xi}_s^x)ds}e^{\frac{y\int_0^{T}h(\hat{\xi}_s^x)ds}{T}-\frac{\left(\int_0^{T}h(\hat{\xi}_s^x)ds\right)^2}{2T}}\right],
\end{align*}
where $\{\hat{\xi}_s^x\}_{s\in [0,1]}$ solves the SDE
\begin{align}\label{xi}
d\hat{\xi}_s^x=b^{\star}(\hat{\xi}_s^x)+\sigma(\hat{\xi}_s^x)d\hat{B}_s,\quad \hat{\xi}_0^x=x,
\end{align}
on the auxiliary probability space $(\hat{\Omega},\hat{\mathcal{F}},\hat{\mathbb{P}})$ with $d$-dimensional Brownian motion $\{\hat{B}_s\}_{s\in [0,1]}$. This gives
\begin{align}\label{FK v}
e^{\frac{(Y_T-y_0)^2}{2T}}v(T,x,Y_T-y_0)=\hat{\mathbb{E}}\left[u_0(\hat{\xi}_T^x)e^{\int_0^Tc(\hat{\xi}_s^x)ds}e^{\frac{(Y_T-y_0)\int_0^Th(\hat{\xi}_s^x)ds}{T}-\frac{\left(\int_0^Th(\hat{\xi}_s^x)ds\right)^2}{2T}}\right].
\end{align}

It is well known that the solution $u(t,x)$ to the Zakai equation (\ref{Zakai SPDE}) also possesses a Feynman-Kac representation: see formula (1.4) page 132 in \cite{Kunita}. Here, we use instead an equivalent formulation due to \cite{BP} (see formula (2.9) there), namely
\begin{align}\label{FK u}
u(T,x)=\hat{\mathbb{E}}\left[u_0(\hat{\xi}_T^x)e^{\int_0^Tc(\hat{\xi}_s^x)ds}e^{\int_0^Th(\hat{\xi}_{T-s}^x)dY_s-\frac{1}{2}\int_0^Th^2(\hat{\xi}_s^x)ds}\right],
\end{align}
where $\{\hat{\xi}_s^x\}_{s\in [0,1]}$ is defined in (\ref{xi}). A comparison between (\ref{FK v}) and (\ref{FK u}) gives
\begin{align*}
&u(T,x)-e^{\frac{(Y_T-y_0)^2}{2T}}v(T,x,Y_T-y_0)\\
&\quad=\hat{\mathbb{E}}\left[u_0(\hat{\xi}_T^x)e^{\int_0^Tc(\hat{\xi}_s^x)ds}e^{\int_0^Th(\hat{\xi}_{T-s}^x)dY_s-\frac{1}{2}\int_0^Th^2(\hat{\xi}_s^x)ds}\right]\\
&\quad\quad-\hat{\mathbb{E}}\left[u_0(\hat{\xi}_T^x)e^{\int_0^Tc(\hat{\xi}_s^x)ds}e^{\frac{(Y_T-y_0)\int_0^Th(\hat{\xi}_s^x)ds}{T}-\frac{\left(\int_0^Th(\hat{\xi}_s^x)ds\right)^2}{2T}}\right]\\
&\quad=\hat{\mathbb{E}}\left[u_0(\hat{\xi}_T^x)e^{\int_0^Tc(\hat{\xi}_s^x)ds}\left(e^{\int_0^Th(\hat{\xi}_{T-s}^x)dY_s-\frac{1}{2}\int_0^Th^2(\hat{\xi}_s^x)ds}-e^{\frac{(Y_T-y_0)\int_0^Th(\hat{\xi}_s^x)ds}{T}-\frac{\left(\int_0^Th(\hat{\xi}_s^x)ds\right)^2}{2T}}\right)\right],
\end{align*}
and hence
\begin{align*}
&\left|u(T,x)-e^{\frac{(Y_T-y_0)^2}{2T}}v(T,x,Y_T-y_0)\right|\\
&\quad\leq\hat{\mathbb{E}}\left[|u_0(\hat{\xi}_T^x)|e^{\int_0^Tc(\hat{\xi}_s^x)ds}\left|e^{\int_0^Th(\hat{\xi}_{T-s}^x)dY_s-\frac{1}{2}\int_0^Th^2(\hat{\xi}_s^x)ds}-e^{\frac{(Y_T-y_0)\int_0^Th(\hat{\xi}_s^x)ds}{T}-\frac{\left(\int_0^Th(\hat{\xi}_s^x)ds\right)^2}{2T}}\right|\right]\\
&\quad\leq | u_0|_{\infty}e^{T|c|_{\infty}}\hat{\mathbb{E}}\left[\left|e^{\int_0^Th(\hat{\xi}_{T-s}^x)dY_s-\frac{1}{2}\int_0^Th^2(\hat{\xi}_s^x)ds}-e^{\frac{(Y_T-y_0)\int_0^Th(\hat{\xi}_s^x)ds}{T}-\frac{\left(\int_0^Th(\hat{\xi}_s^x)ds\right)^2}{2T}}\right|\right].
\end{align*}
We now take $q\geq 1$ and compute the $L^q(\mathbb{P}_1)$-norm of the first and last members above; an application of Minkowsky's inequality gives
\begin{align}\label{b}
&\left\Vert u(T,x)-e^{\frac{(Y_T-y_0)^2}{2T}}v(T,x,Y_T-y_0)\right\Vert_q\nonumber\\
&\quad\leq | u_0|_{\infty}e^{T|c|_{\infty}}\hat{\mathbb{E}}\left[\left\Vert e^{\int_0^Th(\hat{\xi}_{T-s}^x)dY_s-\frac{1}{2}\int_0^Th^2(\hat{\xi}_s^x)ds}-e^{\frac{(Y_T-y_0)\int_0^Th(\hat{\xi}_s^x)ds}{T}-\frac{\left(\int_0^Th(\hat{\xi}_s^x)ds\right)^2}{2T}}\right\Vert_q\right].
\end{align} 

We need the following result.

\begin{lemma}\label{lemma}
Let $f,g:[0,T]\to\mathbb{R}$ be bounded measurable deterministic functions. Then, for any $q\geq 1$ we have
\begin{align*}
&\mathbb{E}_1\left[\left|e^{\int_0^Tf(s)dY_s-\frac{1}{2}|f|^2_{2}}-e^{\int_0^Tg(s)dY_s-\frac{1}{2}|g|^2_{2}}\right|^q\right]^{\frac{1}{q}}\\
&\quad\leq\left(e^{\frac{q_1-1}{2}T|f|_{\infty}^2}+e^{\frac{q_1-1}{2}T|g|_{\infty}^2}\right)\left(\kappa(q_2)+\frac{\sqrt{T}}{2}(|f|_{\infty}+|g|_{\infty})\right)|f-g|_{2}, 
\end{align*}
where $q_1,q_2\geq 1$ satisfy $1/q_1+1/q_2=1/q$ while $\kappa(q_2):=\sqrt{2}\left(\Gamma(\frac{q_2+1}{2})/\sqrt{\pi}\right)^{1/q_2}$. Moreover, $|l|_2$ and $|l|_{\infty}$ stand for the norms in $L^2([0,T])$ and $L^{\infty}([0,T])$ of $l$, respectively. 
\end{lemma}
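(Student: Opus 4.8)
The plan is to exploit that, under $\mathbb{P}_1$, the shifted observation $\{Y_t-y_0\}_{t\in[0,1]}$ is a standard Brownian motion, so that for any bounded deterministic $\ell:[0,T]\to\mathbb{R}$ the Wiener integral $\int_0^T\ell(s)\,dY_s$ is a centered Gaussian variable with variance $|\ell|_2^2$. In particular the two stochastic exponentials are Dol\'eans--Dade martingales whose $L^q(\mathbb{P}_1)$ moments can be computed explicitly through the Gaussian moment generating function, and the whole estimate reduces to exact Gaussian computations together with elementary convexity arguments.

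First I would linearly interpolate between $g$ and $f$: set $f_\theta:=(1-\theta)g+\theta f$ for $\theta\in[0,1]$ and $\Phi(\theta):=\exp\bigl(\int_0^Tf_\theta(s)\,dY_s-\tfrac12|f_\theta|_2^2\bigr)$, so that the quantity to be estimated is $\Phi(1)-\Phi(0)=\int_0^1\Phi'(\theta)\,d\theta$. Differentiating in $\theta$ gives
\[
\Phi'(\theta)=\Phi(\theta)\left(\int_0^T(f-g)(s)\,dY_s-\langle f_\theta,f-g\rangle_2\right),
\]
where $\langle\cdot,\cdot\rangle_2$ denotes the $L^2([0,T])$ inner product. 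Taking $L^q(\mathbb{P}_1)$ norms, using Minkowski's integral inequality to bring the norm inside the $\theta$-integral and then H\"older's inequality with exponents $q_1,q_2$ (so that $1/q_1+1/q_2=1/q$), the problem splits into bounding $\|\Phi(\theta)\|_{q_1}$ and $\bigl\|\int_0^T(f-g)\,dY-\langle f_\theta,f-g\rangle_2\bigr\|_{q_2}$, each uniformly in $\theta\in[0,1]$.

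For the first factor, the Gaussian moment generating function gives the identity $\|\Phi(\theta)\|_{q_1}=\exp\bigl(\tfrac{q_1-1}{2}|f_\theta|_2^2\bigr)$; bounding $|f_\theta|_2^2\le T|f_\theta|_\infty^2$ and invoking convexity of $x\mapsto x^2$ and of the exponential yields $\|\Phi(\theta)\|_{q_1}\le(1-\theta)\exp\bigl(\tfrac{q_1-1}{2}T|g|_\infty^2\bigr)+\theta\exp\bigl(\tfrac{q_1-1}{2}T|f|_\infty^2\bigr)$, which I would further bound by the full sum $\exp(\tfrac{q_1-1}{2}T|f|_\infty^2)+\exp(\tfrac{q_1-1}{2}T|g|_\infty^2)$. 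For the second factor, the deterministic term is controlled by Cauchy--Schwarz, $|\langle f_\theta,f-g\rangle_2|\le|f_\theta|_2\,|f-g|_2\le\sqrt{T}\bigl((1-\theta)|g|_\infty+\theta|f|_\infty\bigr)|f-g|_2$, while the Gaussian term $\int_0^T(f-g)\,dY\sim N(0,|f-g|_2^2)$ has $L^{q_2}$ norm equal to $|f-g|_2$ times the $q_2$-th absolute moment of a standard normal, namely $\sqrt{2}\bigl(\Gamma(\tfrac{q_2+1}{2})/\sqrt{\pi}\bigr)^{1/q_2}=\kappa(q_2)$, which is exactly where $\kappa(q_2)$ appears. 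Multiplying the two bounds and integrating over $\theta\in[0,1]$, the weights $(1-\theta)$ and $\theta$ average to $\tfrac12$ and assemble into the claimed factor $\kappa(q_2)+\tfrac{\sqrt{T}}{2}(|f|_\infty+|g|_\infty)$, completing the estimate.

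I expect the only genuinely delicate point to be the rigorous justification of the representation $\Phi(1)-\Phi(0)=\int_0^1\Phi'(\theta)\,d\theta$ and of the differentiation under the integral sign, which amounts to checking that $\theta\mapsto\Phi(\theta)$ is differentiable as an $L^q(\mathbb{P}_1)$-valued map with integrable derivative; the explicit Gaussian moment bounds above supply the required uniform integrability, but this interchange must be stated with care. Everything else reduces to the exact Gaussian identities and the convexity estimates already indicated.
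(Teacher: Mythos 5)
Your proof is correct and yields exactly the stated constant, but it follows a genuinely different route from the paper. The paper avoids any interpolation in a parameter: it applies, pathwise, the elementary two-point inequality $|e^a-e^b|\le(e^a+e^b)|a-b|$ with $a=\int_0^Tf(s)\,dY_s-\frac12|f|_2^2$ and $b=\int_0^Tg(s)\,dY_s-\frac12|g|_2^2$, and then splits by H\"older with the same exponents $q_1,q_2$ that you use; the deterministic correction there is $\frac12\bigl||f|_2^2-|g|_2^2\bigr|$, handled by factoring the difference of squares, $\bigl||f|_2^2-|g|_2^2\bigr|=(|f|_2+|g|_2)\bigl||f|_2-|g|_2\bigr|\le(|f|_2+|g|_2)|f-g|_2\le\sqrt{T}(|f|_\infty+|g|_\infty)|f-g|_2$, rather than by Cauchy--Schwarz against $f_\theta$ and averaging in $\theta$. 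Your scheme instead interpolates in function space, $f_\theta=(1-\theta)g+\theta f$, so that each $\Phi(\theta)$ is a genuine Dol\'eans--Dade exponential with exact $L^{q_1}(\mathbb{P}_1)$-norm $e^{\frac{q_1-1}{2}|f_\theta|_2^2}$, and the factor $\frac12$ emerges from $\int_0^1\theta\,d\theta=\int_0^1(1-\theta)\,d\theta=\frac12$; both mechanisms terminate in the same Gaussian facts ($\kappa(q_2)$ as the $L^{q_2}$-norm of a standard normal, the moment generating function for the $q_1$-factor). What your route buys is exact identities along the whole family of interpolants and a template that extends to smoother functionals of the path; what it costs is the justification of $\Phi(1)-\Phi(0)=\int_0^1\Phi'(\theta)\,d\theta$ and of the interchange of norm and $\theta$-integral, which the paper's purely pointwise inequality sidesteps entirely. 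On that last point you are over-cautious: since $\int_0^Tf_\theta(s)\,dY_s=(1-\theta)\int_0^Tg(s)\,dY_s+\theta\int_0^Tf(s)\,dY_s$ almost surely, you may define $\Phi(\theta)$ through the right-hand side, making $\theta\mapsto\Phi(\theta,\omega)$ smooth for almost every fixed $\omega$; the fundamental theorem of calculus then holds pathwise, and Minkowski's integral inequality together with your uniform-in-$\theta$ moment bounds justifies the remaining interchange, so no delicate $L^q$-valued differentiability argument is actually needed.
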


\begin{proof}
By means of the elementary inequality $|e^a-e^b|\leq (e^a+e^b)|a-b|$ we can write
\begin{align*}
&\left|e^{\int_0^Tf(s)dY_s-\frac{1}{2}|f|^2_{2}}-e^{\int_0^Tg(s)dY_s-\frac{1}{2}|g|^2_{2}}\right|\\
&\quad\leq \left(e^{\int_0^Tf(s)dY_s-\frac{1}{2}|f|^2_{2}}+e^{\int_0^Tg(s)dY_s-\frac{1}{2}|g|^2_{2}}\right)\\
&\quad\quad\times\left|\int_0^T[f(s)-g(s)]dY_s-\frac{1}{2}\left(|f|^2_{2}-|g|^2_{2}\right)\right|\\
&\quad\leq\left(e^{\int_0^Tf(s)dY_s-\frac{1}{2}|f|^2_{2}}+e^{\int_0^Tg(s)dY_s-\frac{1}{2}|g|^2_{2}}\right)\\
&\quad\quad\times \left(\left|\int_0^T[f(s)-g(s)]dY_s\right|+\frac{1}{2}\left||f|^2_{2}-|g|^2_{2}\right|\right).
\end{align*}
Now, for $q\geq 1$ we take the $L^q(\mathbb{P}_1)$-norm of the first and last members above and apply H\"older's inequality with exponents $q_1,q_2\geq 1$ satisfying $1/q_1+1/q_2=1/q$. This gives
\begin{align}\label{aa}
&\left\Vert e^{\int_0^Tf(s)dY_s-\frac{1}{2}|f|^2_{2}}-e^{\int_0^Tg(s)dY_s-\frac{1}{2}|g|^2_{2}}\right\Vert_{q}\nonumber\\
&\quad\leq \left\Vert e^{\int_0^Tf(s)dY_s-\frac{1}{2}|f|^2_{2}}+e^{\int_0^Tg(s)dY_s-\frac{1}{2}|g|^2_{2}}\right\Vert_{q_1}\nonumber\\
&\quad\quad\times\left(\left\Vert\int_0^T[f(s)-g(s)]dY_s\right\Vert_{q_2}+\frac{1}{2}\left||f|^2_{2}-|g|^2_{2}\right|\right).
\end{align}
Under the measure $\mathbb{P}_1$, the random variables $\int_0^Tf(s)dY_s$ and $\int_0^Tg(s)dY_s$ are Gaussian with mean zero and variances $|f|^2_{2}$ and $|g|^2_{2}$, respectively. Hence,
\begin{align}\label{ab}
&\left\Vert e^{\int_0^Tf(s)dY_s-\frac{1}{2}|f|^2_{2}}+e^{\int_0^Tg(s)dY_s-\frac{1}{2}|g|^2_{2}}\right\Vert_{q_1}\nonumber\\
&\quad\leq \left\Vert e^{\int_0^Tf(s)dY_s-\frac{1}{2}|f|^2_{2}}\right\Vert_{q_1}+\left\Vert e^{\int_0^Tg(s)dY_s-\frac{1}{2}|g|^2_{2}}\right\Vert_{q_1}\nonumber\\
&\quad=e^{\frac{q_1-1}{2}|f|^2_{2}}+e^{\frac{q_1-1}{2}|g|^2_{2}}\nonumber\\
&\quad\leq e^{\frac{q_1-1}{2}T|f|_{\infty}^2}+e^{\frac{q_1-1}{2}T|g|_{\infty}^2}.
\end{align}
Moreover, using once more the normality, under the measure $\mathbb{P}_1$, of the random variable $\int_0^T[f(s)-g(s)]dY_s$ we get
\begin{align}\label{ac}
\left\Vert\int_0^T[f(s)-g(s)]dY_s\right\Vert_{q_2}&=\kappa(q_2)|f-g|_{2},
\end{align}
where $\kappa(q_2):=\sqrt{2}\left(\Gamma(\frac{q_2+1}{2})/\sqrt{\pi}\right)^{1/q_2}$ (see, for instance, Formula (1.1) in \cite{Janson}). Furthermore,
\begin{align}\label{ad}
\left||f|^2_{2}-|g|^2_{2}\right|&=\left(|f|_{2}+|g|_{2}\right)\left||f|_{2}-|g|_{2}\right|\nonumber\\
&\leq \left(|f|_{2}+|g|_{2}\right)|f-g|_{2}\nonumber\\
&\leq \sqrt{T}(|f|_{\infty}+|g|_{\infty})|f-g|_{2}.
\end{align}
Therefore, combining (\ref{aa}) with (\ref{ab}), (\ref{ac}) and (\ref{ad}) we get
\begin{align*}
&\left\Vert e^{\int_0^Tf(s)dY_s-\frac{1}{2}|f|^2_{2}}-e^{\int_0^Tg(s)dY_s-\frac{1}{2}|g|^2_{2}}\right\Vert_{q}\\
&\leq \left(e^{\frac{q_1-1}{2}T|f|_{\infty}^2}+e^{\frac{q_1-1}{2}T|g|_{\infty}^2}\right)\left(\kappa(q_2)+\frac{\sqrt{T}}{2}(|f|_{\infty}+|g|_{\infty})\right)|f-g|_{2}.
\end{align*}
The proof is complete.
\end{proof}

Thanks to the identities
\begin{align*}
\frac{(Y_T-y_0)\int_0^Th(\hat{\xi}_s^x)ds}{T}=\int_0^T\left(\frac{1}{T}\int_0^Th(\hat{\xi}_r^x)dr\right)dY_s,
\end{align*}
and 
\begin{align*}
\int_0^T\left(\frac{1}{T}\int_0^Th(\hat{\xi}_r^x)dr\right)^2ds=\frac{\left(\int_0^Th(\hat{\xi}_r^x)dr\right)^2}{T},
\end{align*}
we are in a position to apply Lemma \ref{lemma} to the last term in (\ref{b}) with
\begin{align*}
f(s):=h(\hat{\xi}_{T-s}^x)\quad\mbox{ and }\quad g(s):=\frac{1}{T}\int_0^Th(\hat{\xi}_r^x)dr;
\end{align*} 
note that such choices imply $|f|_{\infty}\leq |h|_{\infty}$ and $|g|_{\infty}\leq |h|_{\infty}$ (here, the norms are on the corresponding domains). Therefore, 
\begin{align*}
&\left\Vert u(T,x)-e^{\frac{(Y_T-y_0)^2}{2T}}v(T,x,Y_T-y_0)\right\Vert_q\nonumber\\
&\quad\leq 2|u_0|_{\infty}e^{T\left(|c|_{\infty}+\frac{q_1-1}{2}|h|_{\infty}^2\right)}\left(\kappa(q_2)+\sqrt{T}|h|_{\infty}\right)\\
&\quad\quad\times\hat{\mathbb{E}}\left[\left(\int_0^T\left|h(\hat{\xi}_{T-s}^x)-\frac{1}{T}\int_0^Th(\hat{\xi}_r^x)dr\right|^2ds\right)^{1/2}\right].
\end{align*}
We now focus on the last expectation; using a combination of Jensen's inequalities and Tonelli's theorem we get
\begin{align*}
&\hat{\mathbb{E}}\left[\left(\int_0^T\left|h(\hat{\xi}_{T-s}^x)-\frac{1}{T}\int_0^Th(\hat{\xi}_r^x)dr\right|^2ds\right)^{1/2}\right]\\
&\quad\leq \left(\hat{\mathbb{E}}\left[\int_0^T\left|h(\hat{\xi}_{T-s}^x)-\frac{1}{T}\int_0^Th(\hat{\xi}_r^x)dr\right|^2ds\right]\right)^{1/2}\\
&\quad= \left(\int_0^T\hat{\mathbb{E}}\left[\left|h(\hat{\xi}_{T-s}^x)-\frac{1}{T}\int_0^Th(\hat{\xi}_r^x)dr\right|^2\right]ds\right)^{1/2}\\
&\quad= \left(\int_0^T\hat{\mathbb{E}}\left[\left|\frac{1}{T}\int_0^T(h(\hat{\xi}_{T-s}^x)-h(\hat{\xi}_r^x))dr\right|^2\right]ds\right)^{1/2}\\
&\quad\leq \left(\int_0^T\hat{\mathbb{E}}\left[\frac{1}{T}\int_0^T|h(\hat{\xi}_{T-s}^x)-h(\hat{\xi}_r^x)|^2dr\right]ds\right)^{1/2}\\
&\quad= \left(\int_0^T\left(\frac{1}{T}\int_0^T\hat{\mathbb{E}}\left[|h(\hat{\xi}_{T-s}^x)-h(\hat{\xi}_r^x)|^2\right]dr\right)ds\right)^{1/2}.
\end{align*}
The Lipschitz continuity of $h$ and Theorem 4.3, Chapter 2 in \cite{Mao book} yield
\begin{align*}
\hat{\mathbb{E}}\left[|h(\hat{\xi}_{T-s}^x)-h(\hat{\xi}_r^x)|^2\right]&\leq L^2\hat{\mathbb{E}}\left[|\hat{\xi}_{T-s}^x-\hat{\xi}_r^x|^2\right]\\
&\leq 2L^2(1+|x|^2)(1+T)e^{2(\sqrt{M}+M/2)T}|T-s-r|,
\end{align*}
where $L$ and $M$ come from \eqref{L} and \eqref{M}. Moreover,
\begin{align*}
\left(\int_0^T\left(\frac{1}{T}\int_0^T|T-s-r|dr\right)ds\right)^{1/2}=\frac{T}{\sqrt{3}}.
\end{align*}
Combining all our estimates we obtain
\begin{align*}
&\left\Vert u(T,x)-e^{\frac{(Y_T-y_0)^2}{2T}}v(T,x,Y_T-y_0)\right\Vert_q\nonumber\\
&\quad\leq \frac{2}{\sqrt{3}}|u_0|_{\infty}e^{T\left(|c|_{\infty}+\frac{q_1-1}{2}|h|_{\infty}^2+\sqrt{M}+M/2\right)}\left(\kappa(q_2)+\sqrt{T}|h|_{\infty}\right)\\
&\quad\quad\times L\sqrt{2(1+|x|^2)(1+T)}T,
\end{align*}
as desired.

\end{document}